\newcommand{\f}{\frac}
\newcommand{\no}{\notag}
\newcommand{\lb}{\label}
\newcommand{\bb}{\mathbb}
\newcommand{\cc}{\mathcal}
\newcommand{\ol}{\overline}
\newcommand{\bs}{\backslash}
\newcommand{\al}{\alpha}
\newcommand{\be}{\beta}
\newcommand{\de}{\delta}
\newcommand{\ga}{\gamma}
\newcommand{\la}{\lambda}
\newcommand{\te}{\theta}
\newcommand{\si}{\sigma}
\newcommand{\eps}{\varepsilon}
\newcommand{\pd}{\partial}
\newcommand{\ca}{C}
\newcommand{\x}{\mathbf x}
\newcommand{\supp}{\mathrm{supp}}
\newcommand{\<}{\langle}
\renewcommand{\>}{\rangle}
\renewcommand{\Re}{\mathrm{Re}}
\renewcommand{\Im}{\mathrm{Im}}
\newtheorem{theorem}{Theorem}
\theoremstyle{definition}
\theoremstyle{remark}
\numberwithin{equation}{section}
\numberwithin{theorem}{section}
\begin{document}

\title{Sharp lower bounds for the Widom factors on the real line}
\author{G\"{o}kalp Alpan}
\address{Department of Mathematics, Rice University, Houston, TX 77005, USA}
\email{alpan@rice.edu}

\author{Maxim Zinchenko$^{1}$}
\address{Department of Mathematics and Statistics, University of New Mexico, 311 Terrace Street NE, MSC01 1115, Albuquerque, NM 87106, USA}
\email{maxim@math.unm.edu}
\thanks{\footnotesize $^{1}$Research supported in part by Simons Foundation grant CGM-581256.}

\subjclass[2010]{Primary 41A17; Secondary 41A44, 42C05, 33C45, 47B36}
\keywords{Widom factors, Szeg\H{o} class, Equilibrium measure, Extremal polynomials, Jacobi polynomials, Jacobi matrices, Isospectral torus}

\begin{abstract}
We derive lower bounds for the $L^p(\mu)$ norms of monic extremal polynomials with respect to compactly supported probability measures $\mu$. We obtain a sharp universal lower bound for all $0<p<\infty$ and all measures in the Szeg\H{o} class and an improved lower bound on $L^2(\mu)$ norm for several classes of orthogonal polynomials including Jacobi polynomials, isospectral torus of a finite gap set and orthogonal polynomials with respect to the equilibrium measure of an arbitrary non-polar compact subset of $\bb R$.
\end{abstract}

\date{\today}
\maketitle

\section{Introduction}

Let $K$ be a non-polar compact subset of $\bb C$ and $\mu$ a probability Borel measure with $\supp(\mu)=K$. In this work we investigate lower bounds on the $L^p(\mu)$ norms of monic polynomials. A well known inequality that goes back to Szeg\H{o} \cite{Sze24} (for a textbook presentation see \cite[Theorem 5.5.4]{Ran95} or \cite[Theorem 5.7.8]{Sim11}) provides such a lower bound for $L^\infty(K)$ norm,
\begin{align}\label{ChebLB}
\|P_n\|_\infty = \sup_{z\in K}|P_n(z)| \ge \ca(K)^n, \quad P_n\in\cc P_n, \; n\in\bb N,
\end{align}
where $\cc P_n$ is the set of monic polynomials of degree $n$ and $\ca(K)$ denotes the logarithmic capacity of $K$. The inequality \eqref{ChebLB} is sharp in the class of subset of $\bb C$, however for compact sets $K\subset\bb R$, Schiefermayr \cite{Sch08} showed that the inequality can be improved to
\begin{align}\label{ChebLB2}
\|P_n\|_\infty \ge 2\ca(K)^n, \quad P_n\in\cc P_n, \; n\in\bb N,
\end{align}
which is optimal in the class of subsets of $\bb R$.

We are interested in finding sharp analogs of the above inequalities for $L^p(\mu)$ norms. To simplify the notation we introduce the Widom factors,
\begin{align}
W^p_n(\mu) = \f{\inf_{P_n\in\cc P_n}\|P_n\|_p}{C(K)^n}, \quad n\in\bb N, \; 0<p\le\infty,
\end{align}
where as usual $\|P_n\|_p=\left(\int|P_n(z)|^pd\mu(z)\right)^{1/p}$, $0<p<\infty$. Since $\mu$ is a probability measure, by H\"older's inequality, $W^p_n(\mu) \le W^q_n(\mu)\le W^\infty_n(K)$ for $0<p<q<\infty$.

From the application point of view there are two important cases $p=\infty$ and $p=2$. The monic polynomials that have the smallest $L^\infty(K)$ norm are known as the Chebyshev polynomials and those that have the smallest $L^2(\mu)$ norm are the orthogonal polynomials with respect to $\mu$. We use the term \textit{Widom factors} to commemorate the fundamental paper \cite{Wid69} where H.\ Widom studied asymptotics of the Chebyshev and orthogonal polynomials on sets $K$ consisting of a finite number of smooth Jordan curves and arcs. More recently, asymptotics and upper bounds on $W^\infty_n(K)$ have been studied in \cite{And17, AndNaz18, Eic17, GonHat15, CSZ17, CSYZ19, CSZ3, CSZ4, Tot09, Tot11, Tot12, Tot14, TotVar15,TotYud15}. Due to monotonicity of the Widom factors, an upper bound on $W^\infty_n(K)$ is automatically an upper bound for all $W^p_n(\mu)$. The main contribution of the present work is complementary sharp lower bounds for the Widom factors $W^p_n(\mu)$.

For absolutely continuous measures $\mu$ on the unit circle a lower bound and asymptotics of $W^2_n(\mu)$ date back to the work of Szeg\H{o} \cite{Sze20,Sze21} (for a textbook presentation  see \cite[Sections 2.2 and 2.3]{Sim05}). Asymptotics of $W^2_n(\mu)$ for more general measures and on other sets have been actively studied ever since \cite{AlpGon15, AlpGon16, Apt86, Bae11, Chr12, Chr19, CSZ11, DKS10, Ger60, KruSim15, Nev79, PehYud01, PehYud03, Sim05, Sim11, SimZla03, Sze22, Wid69}. In these works the central role is played by measures $\mu$ from the Szeg\H{o} class which in the most general setting is defined as follows. Given a probability measure $\mu$ with $K=\supp(\mu)$ a non-polar compact subset of $\bb C$, denote by $\mu_K$ the equilibrium measure of $K$ (see \cite{Ran95} or \cite[Section 5.5]{Sim11} for basic notions of logarithmic potential theory) and consider the Lebesgue decomposition of $\mu$ with respect to $\mu_K$, that is, $d\mu=fd\mu_K+d\mu_s$. The Szeg\H{o} class consists of such measures $\mu$ that have finite relative entropy with respect to $\mu_K$, that is,
\begin{align}
\int\log f(z)d\mu_K(z) > -\infty.
\end{align}
The relative entropy enters the asymptotics and lower bounds via the exponential relative entropy function
\begin{align}
S(\mu) = \exp\left[\int\log f(z)d\mu_K(z)\right].
\end{align}
As with the lower bound in the case $p=\infty$ (cf., \eqref{ChebLB} vs \eqref{ChebLB2}) there is a difference in the asymptotics of $W^2_n(\mu)$ depending on whether the measure $\mu$ is supported on $\bb R$ or on $\bb C$, for example,
\begin{align} \label{SzOPUC}
\lim_{n\to\infty}\left[W^2_n(\mu)\right]^2 = S(\mu)
\end{align}
for measures $\mu$ with $\supp(\mu)=\pd\bb D$ (see for example \cite[Theorem 2.3.1]{Sim05}) and
\begin{align} \label{SzOPRL}
\lim_{n\to\infty}\left[W^2_n(\mu)\right]^2 = 2S(\mu)
\end{align}
for measures $\mu$ with $\supp(\mu)=[-2,2]$ (see for example \cite{PehYud01}, \cite[Theorem 13.8.8]{Sim05}).

Recently a lower bound for the Widom factors $W^2_n(\mu)$ was obtained in \cite{Alp17} for the equilibrium measure $\mu=\mu_K$ of a general non-polar compact set $K\subset\bb R$ and in \cite{Alp19} for a general Szeg\H{o} class measure $\mu$ on $\bb C$,
\begin{align} \lb{AlpLB}
\left[W^2_n(\mu)\right]^2 \ge S(\mu).
\end{align}
The goal of the present work is to extend \eqref{AlpLB} to all Widom factors $W^p_n(\mu)$ and investigate to what extent such a lower bound is sharp and whether it can be improved for measures supported on $\bb R$ with a special emphasis on the case $p=2$.
It turns out that the lower bound \eqref{AlpLB} is sharp in the Szeg\H{o} class even for measures with $\supp(\mu)=[-2,2]$. The sharpness for measures on $\bb R$ is a rather surprising result as it stands in contrast with the case $p=\infty$ (cf., \eqref{ChebLB} vs \eqref{ChebLB2}) and the Szeg\H{o} asymptotics (cf., \eqref{SzOPUC} vs \eqref{SzOPRL}). Nevertheless, we will show that for several special classes of measures on $\bb R$ the lower bound can be improved. In particular, for the equilibrium measures $\mu=\mu_K$ of compact non-polar sets $K\subset\bb R$ the lower bound \eqref{AlpLB} improves by a factor of $2$,
\begin{align} \lb{LB2}
\left[W^2_n(\mu)\right]^2 \ge 2S(\mu).
\end{align}
In light of the asymptotics \eqref{SzOPRL}, the lower bound \eqref{LB2} is the best possible. We also obtain similar improvements on the lower bounds of Widom factors $W^p_n(\mu_K)$ for $p>1$. Besides the equilibrium measure we prove the optimal lower bound \eqref{LB2} for measures from the finite gap isospectral torus of half-line Jacobi matrices and for Jacobi weights $d\mu_{\al,\be}(x)=c_{\al,\be}(1-x)^\al(1+x)^\be\chi_{[-1,1]}(x)dx$ on $[-1,1]$ for a certain range of parameters $\al, \be$.

The plan of the paper is as follows. In Section 2, we extend the lower bound of \eqref{AlpLB} to the case of general Widom factors $W_n^p(\mu)$ and show that the bound is optimal not only in the class of measures on the complex plane but also on the real line. In particular, for measures on $\bb R$ the Szeg\H{o} condition alone is insufficient for \eqref{LB2}. In Section 3 we obtain increased lower bounds on $W^p_n(\mu_K)$ for the equilibrium measures $\mu_K$ on compact non-polar subsets of $\bb R$. In Section 4 we consider lower bounds on $W^2_n(\mu_{\al,\be})$ for the Jacobi weights over the full range of parameters. In Section 5, we prove \eqref{LB2} for measures $\mu$ associated with half-line Jacobi matrices from finite gap isospectral tori. Finally, in Section 6, we discuss some open problems.

\section{A sharp lower bound for the Widom factors}

In this section we extend the lower bound \eqref{AlpLB} to the general Widom factors and show that our lower bound is optimal in the class of Szeg\H{o} measures even if the support of the measure is an interval on the real line.

\begin{theorem}\label{ULBthm}
Let $0<p<\infty$ and $\mu$ be a Borel probability measure with $K=\supp(\mu)$ a non-polar compact subset of $\bb C$. Then
\begin{align}
\label{UniversalLB}
W_n^p(\mu) \ge S(\mu)^{1/p}, \quad n\in\bb N.
\end{align}
\end{theorem}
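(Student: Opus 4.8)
The plan is to prove the equivalent pointwise bound
\begin{align}
\int |P_n|^p\,d\mu \ge S(\mu)\,\ca(K)^{np}, \quad P_n\in\cc P_n,
\end{align}
from which \eqref{UniversalLB} follows upon taking $p$-th roots and then the infimum over $\cc P_n$. If $\mu$ is not in the Szeg\H{o} class then $S(\mu)=0$ and the bound is vacuous, so I may assume $S(\mu)>0$.

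First I would discard the singular part of the Lebesgue decomposition $d\mu=f\,d\mu_K+d\mu_s$. Since $d\mu\ge f\,d\mu_K$, one has
\begin{align}
\int|P_n|^p\,d\mu \ge \int|P_n|^p f\,d\mu_K.
\end{align}
The central step is to apply Jensen's inequality to the concave function $\log$ against the probability measure $\mu_K$, which gives $\log\int g\,d\mu_K\ge\int\log g\,d\mu_K$. Taking $g=|P_n|^p f$ and exponentiating yields
\begin{align}
\int|P_n|^p f\,d\mu_K \ge \exp\left[p\int\log|P_n|\,d\mu_K+\int\log f\,d\mu_K\right],
\end{align}
and by definition the second integral in the exponent equals $\log S(\mu)$.

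It remains to bound the first integral by potential theory. Writing $P_n(z)=\prod_{j=1}^n(z-z_j)$ and denoting by $U^{\mu_K}(z)=\int\log\f{1}{|z-w|}\,d\mu_K(w)$ the logarithmic potential of the equilibrium measure, Frostman's theorem gives $U^{\mu_K}\le -\log\ca(K)$ on all of $\bb C$, whence
\begin{align}
\int\log|P_n|\,d\mu_K = -\sum_{j=1}^n U^{\mu_K}(z_j) \ge n\log\ca(K).
\end{align}
Substituting the last two displays into the previous one produces the claimed bound $\int|P_n|^p f\,d\mu_K\ge\ca(K)^{np}S(\mu)$, completing the proof.

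The main obstacle I anticipate is the routine but necessary bookkeeping to justify Jensen's inequality: one must confirm that $\log(|P_n|^p f)$ is $\mu_K$-integrable or diverges to $-\infty$ (the favorable direction), and that the Frostman upper bound on $U^{\mu_K}$ is valid even at zeros $z_j$ lying on $K$, where the potential could a priori fail to attain the Robin constant. Here it is essential that Frostman supplies the one-sided bound $U^{\mu_K}\le-\log\ca(K)$ at \emph{every} point of $\bb C$, which is precisely the direction needed for the lower estimate.
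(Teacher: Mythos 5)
Your proposal is correct and follows essentially the same route as the paper's own proof: discard the singular part, apply Jensen's inequality to $\log(|P_n|^p f)$ against $\mu_K$, and invoke Frostman's theorem to bound the equilibrium potential at the zeros, yielding $\|P_n\|_p^p \ge S(\mu)\,\ca(K)^{np}$. Your closing remarks on the one-sided validity of Frostman's bound at every point of $\bb C$ and the favorable direction of a possibly divergent $\int\log(|P_n|^p f)\,d\mu_K$ correctly identify the only technical points, which the paper treats implicitly.
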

\begin{proof}
If $S(\mu)=0$ there is nothing to prove. Let us assume that $S(\mu)>0$. We modify the argument used in the proof of Theorem 1.2 in \cite{Alp19}.
Let $d\mu=f\mu_K+d\mu_s$ and write $P_n\in\cc P_n$ as $P_n(z)= \prod_{j=1}^n (z-z_j)$. Then
\begin{align}
\|P_n\|_p^p &= \left(\int |P_n|^p f\, d\mu_K+\int |P_n|^p d\mu_s\right) \geq \int |P_n|^p f\, d\mu_K \label{babba11}\\
&= \exp\left[\log{\left(\int |P_n|^p f\, d\mu_K\right)}\right]\label{babba}\\		
&\geq \exp\left[{\int \log{(|P_n|^p f)}\,d \mu_K }\right] \label{baba1} \\
&= \exp\left[\int \log{f}\, d\mu_K\right]
\exp\left[p \int \sum_{j=1 }^n \log |z-z_j| d\mu_K(z)\right] \label{baba2}\\
&\geq S(\mu) \ca(K)^{np} \label{baba3}.
\end{align}
	
Note that, \eqref{baba1} follows from Jensen's inequality and \eqref{baba3} follows from Frostman's theorem, see Theorem 3.3.4 (a) in \cite{Ran95}.  The inequality \eqref{UniversalLB} follows by taking $p$-th root and dividing by $\ca(K)^n$.
\end{proof}

It is easy to see that \eqref{UniversalLB} is sharp in the class of probability measures on the complex plane since for the equilibrium measure on the unit circle $\mu_{\pd\bb D}$ we have $1=S(\mu_{\pd\bb D})^{1/p}\le W_n^p(\mu)\le \|z^n\|_p=1$ for all $n\in\bb N$ and $0<p<\infty$. The next result shows that for $0<p<\infty$, $S(\mu)$ is the best possible lower bound for $W_n^p(\mu)$ in the Szeg\H{o} class of probability measures on the real line.

\begin{theorem}
For each $0<p<\infty$ and $n\in\bb N$ fixed,
\begin{align}
\label{ULB-Sharp}
\inf_{\mu} \big[W^p_n(\mu)\big]^p/S(\mu) = 1,
\end{align}
where the infimum is taken over probability measures on $K=[-2,2]$ with $S(\mu)>0$.
\end{theorem}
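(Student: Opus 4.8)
The lower bound is already in hand: since $\ca([-2,2])=1$, Theorem~\ref{ULBthm} gives $[W_n^p(\mu)]^p\ge S(\mu)$ for every admissible $\mu$, so the infimum in \eqref{ULB-Sharp} is at least $1$. The whole task is therefore to exhibit measures for which the ratio $[W_n^p(\mu)]^p/S(\mu)$ comes arbitrarily close to $1$, i.e.\ measures that nearly saturate the chain \eqref{babba11}--\eqref{baba3}. Reading off the equality conditions there, I would look for a purely absolutely continuous $\mu=f\,d\mu_K$ and a monic $P_n$ making $|P_n|^pf$ constant (so the Jensen step \eqref{baba1} is \emph{exact}) while the zeros of $P_n$ sit near $K$ (so the Frostman step \eqref{baba3} is only slightly lossy). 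This points to a weight proportional to $1/|P_n|^p$.

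Fix $n$ and $0<p<\infty$, and for small $\eta>0$ put $P_n(x)=(x-i\eta)^n$ and
\[
d\mu_\eta=\frac{c_\eta}{|P_n|^p}\,d\mu_K,\qquad c_\eta=\Big(\int|P_n|^{-p}\,d\mu_K\Big)^{-1}.
\]
Placing the zeros off the real axis is the crucial device: it gives $|P_n(x)|\ge\eta^n$ on $[-2,2]$, so $|P_n|^{-p}$ is bounded, $c_\eta\in(0,\infty)$, and $\mu_\eta$ is a genuine probability measure with $\supp(\mu_\eta)=[-2,2]$. (If one prefers real coefficients, placing the zeros just outside the endpoints of $[-2,2]$ serves the same purpose.) By design $|P_n|^pf_\eta\equiv c_\eta$, so in \eqref{babba11}--\eqref{baba3} every step except Frostman holds with equality.

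Two short computations then finish the construction. Testing the extremal problem against the competitor $P_n$ gives $\inf_{Q\in\cc P_n}\|Q\|_p^p\le\|P_n\|_p^p=c_\eta\int d\mu_K=c_\eta$, hence $[W_n^p(\mu_\eta)]^p\le c_\eta$. For the entropy, $\log f_\eta=\log c_\eta-p\log|P_n|$ yields
\[
S(\mu_\eta)=\exp\!\Big(\int\log f_\eta\,d\mu_K\Big)=c_\eta\exp\!\Big(-p\int\log|P_n|\,d\mu_K\Big)=c_\eta\,e^{-pn\,g(i\eta)},
\]
where $g$ is the Green's function of $\bb C\setminus[-2,2]$ with pole at infinity and I have used the Frostman identity $\int\log|x-w|\,d\mu_K(x)=g(w)$ for $w\notin[-2,2]$ (valid since $\ca([-2,2])=1$; cf.\ \cite{Ran95}). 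Dividing the two estimates, $[W_n^p(\mu_\eta)]^p/S(\mu_\eta)\le e^{pn\,g(i\eta)}$.

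It remains to let $\eta\to0^+$. Since $g$ is continuous and vanishes on $[-2,2]$---explicitly $g(i\eta)=\log\frac{\eta+\sqrt{\eta^2+4}}{2}\to0$---the bound $e^{pn\,g(i\eta)}\to1$, and together with the universal lower bound this pins the infimum in \eqref{ULB-Sharp} at $1$ (approached, but not attained, as $\eta\to0$). The one genuine obstacle is integrability for $p\ge1$: a weight built from a polynomial with zeros \emph{inside} $[-2,2]$---which would make $|P_n|^pf$ exactly constant with all zeros in $K$ and hence force the ratio to be exactly $1$---fails to be integrable against $d\mu_K$. The entire argument hinges on displacing the zeros by $\eta$ to restore integrability and then verifying, via the Green's function, that this displacement inflates the entropy by only the vanishing factor $e^{pn\,g(i\eta)}$.
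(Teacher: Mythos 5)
Your proof is correct, and it reaches \eqref{ULB-Sharp} by a genuinely different mechanism than the paper, even though both start from the same heuristic (make the weight proportional to $1/|P_n|^p$ so that the Jensen step is exact and $P_n$ is a near-optimal competitor). The two proofs resolve the integrability obstruction for $np\ge1$ in opposite ways. The paper keeps the singularities \emph{inside} $K$: it replaces the non-integrable $|x|^{-np}$ by $N\approx np$ integrable square-root singularities, taking $d\mu_\eps=c_\eps|x|^{N-np}\prod_{j=1}^N|x^2-j^2\eps^2|^{-1/2}d\mu_K$ with $np-1<N\le np$; since all singular points lie in the regular set $K$ where $U_K=0$, Frostman gives the entropy \emph{exactly}, $S(\mu_\eps)=c_\eps$, and the loss is pushed into the norm estimate, where a dominated convergence argument shows $\|x^n\|_{p,\mu_\eps}^p\le c_\eps(1+o(1))$ as $\eps\to0$; a separate (easier) construction $d\nu=c|x|^{-np}d\mu_K$ handles $np<1$. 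You instead displace the zeros \emph{off} the set, to $i\eta$, which makes the weight bounded, so the norm bound $[W_n^p(\mu_\eta)]^p\le c_\eta$ is exact and the entire loss is quarantined in the entropy via the Green's function, $S(\mu_\eta)=c_\eta e^{-pn\,g(i\eta)}$, with the explicit rate $e^{pn\,g(i\eta)}=\big(\frac{\eta+\sqrt{\eta^2+4}}{2}\big)^{pn}\to1$. Your route buys uniformity (one construction for all $0<p<\infty$, no case split at $np=1$, no dominated convergence) and a quantitative error bound; the paper's route buys weights singular only on $K$ and an exact entropy identity. One point you should make explicit rather than parenthetical: $(x-i\eta)^n$ has complex coefficients, so you are reading $\cc P_n$ as monic polynomials with arbitrary complex zeros. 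This is consistent with the paper (the proof of Theorem \ref{ULBthm} writes $P_n(z)=\prod_{j=1}^n(z-z_j)$ with unrestricted $z_j$), and your real-zero variant with zeros at $\pm(2+\eta)$, say $\lceil n/2\rceil$ at one endpoint and $\lfloor n/2\rfloor$ at the other, covers the real-coefficient reading with the same Green's function estimate, so no gap results either way.
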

\begin{proof}
First, assume that $np\geq 1$. Let $N$ be the integer satisfying $np-1 < N \le np$ and consider the measures
\begin{align*}
d\mu_{\eps}(x) = c_{\eps}|x|^{N-np}\prod_{j=1}^N|x^2-j^2\eps^2|^{-1/2}d\mu_K(x), \quad \eps>0,
\end{align*}
where $c_{\eps}>0$ is the normalization constant chosen such that $\mu_{\eps}(K)=1$. The equilibrium measure is given by $d\mu_K(x)=\f1\pi\f{\chi_K(x)}{\sqrt{4-x^2}}dx$. Since $K$ is a regular set for potential theory, by Frostman's theorem, the logarithmic potential $U_K(z)=\int\log|x-z|d\mu_K(x)$ equals $\log\ca(K)=0$ for all $z\in K$, so for each $0<\eps<1/N$ we get
\begin{align}
\label{SLB-S}
S(\mu_{\eps}) &= \exp\bigg[ \int\log\Big(c_{\eps}|x|^{N-np}\prod_{j=1}^N|x^2-j^2\eps^2|^{-1/2}\Big)d\mu_K(x) \bigg] \no
\\&=
c_{\eps}\exp\bigg[(N-np)U_K(0)-\f12\sum_{j=1}^N U_K(j\eps)+U_K(-j\eps)\bigg]
= c_{\eps}.
\end{align}
On the other hand, we have
\begin{align}
\label{SLB-W}
W_n^p(\mu_\eps)^p \le \int|x^n|^p d\mu_{\eps}(x) = c_{\eps}\int\f{|x|^N}{\prod_{j=1}^N|x^2-j^2\eps^2|^{1/2}}d\mu_K(x) \to c_\eps
\end{align}
as $\eps\to0$ since
\begin{align*}
\f{|x|^N}{\prod_{j=1}^N|x^2-j^2\eps^2|^{1/2}} = \f{1}{\prod_{j=1}^N|1-(j\eps/x)^2|^{1/2}} \le \f{1}{|1-\f{N^2}{(N+1)^2}|^{N/2}} \text{ for } |x|\ge(N+1)\eps
\end{align*}
so by the dominated convergence theorem,
\begin{align*}
\int_{|x|\ge(N+1)\eps} \f{|x|^N}{\prod_{j=1}^N|x^2-j^2\eps^2|^{1/2}}d\mu_K(x)
\to \int d\mu_K = 1 \text{ as } \eps\to0
\end{align*}
and
\begin{align*}
&\int_{|x|<(N+1)\eps} \f{|x|^N}{\prod_{j=1}^N|x^2-j^2\eps^2|^{1/2}}d\mu_K(x) =
\int_{-1}^1\f{|t|^N}{\prod_{j=1}^N|t^2-\f{j^2}{(N+1)^2}|^{1/2}}
d\mu_K((N+1)\eps t)
\\
&\qquad
\le
\f1\pi\int_{-1}^1\f{|t|^N}{\prod_{j=1}^N|t^2-\f{j^2}{(N+1)^2}|^{1/2}}
\f{(N+1)\eps dt}{\sqrt{4-(N+1)^2\eps^2t^2}}
\to 0 \text{ as } \eps\to0.
\end{align*}
Thus, by \eqref{SLB-S} and \eqref{SLB-W}, $\limsup_{\eps\to0}W_n^p(\mu_{\eps})^p/S(\mu_{\eps})\le1$. This combined with \eqref{UniversalLB} yields \eqref{ULB-Sharp}.

Next, assume that $np<1$. Consider the measure $d\nu(x)= c |x|^{-np} d\mu_K(x)$ where $c>0$ is chosen so that $\nu(K)=1$. Then

\begin{align*}
S(\nu) &= \exp\bigg[ \int\log\Big(c|x|^{-np}\Big)d\mu_K(x) \bigg] \no
\\&=
c\exp[(-np)U_K(0)]
= c.
\end{align*}
We also have
\begin{align*}
W_n^p(\nu)^p \le \int|x^n|^p d\nu(x) = c\int d\mu_K(x) =c
\end{align*}
Thus, $W_n^p(\nu)^p/S(\nu)\le 1$. This combined with \eqref{UniversalLB} yields \eqref{ULB-Sharp} in this case.
\end{proof}

\section{Lower bounds for the equilibrium measures on subsets of $\bb R$}

In this section we improve the lower bound \eqref{UniversalLB} for equilibrium measures on general compact non-polar subsets of $\bb R$.

\begin{theorem} \label{EqLBthm}
Let $K\subset\bb R$ be a compact non-polar set. Then for each $p>1$,
\begin{align}
\label{EqLB1}
W_n^p(\mu_K) \ge 2\left(\f{(m!)^2}{(2m)!}\right)^{\f1{2m}} > S(\mu_K)^{1/p}=1, \quad n\in\bb N,
\end{align}
where $m=\big\lceil\f{p}{2(p-1)}\big\rceil$. In particular, for $p\ge2$,
\begin{align}
\label{EqLB2}
W_n^p(\mu_K) \ge \sqrt2, \quad n\in\bb N,
\end{align}
and the case $p=2$ is the improved lower bound \eqref{LB2} which is sharp in the class of equilibrium measures of non-polar compact subsets of $\bb R$.
\end{theorem}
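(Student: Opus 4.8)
The plan is to pass to the pushforward measure $\nu=(P_n)_*\mu_K$ on $\bb R$ and reduce the bound to a one–dimensional extremal problem for the arcsine (equilibrium) measure of a symmetric interval. Since $\mu_K$ is supported on $\bb R$ and $P_n$ may be taken with real coefficients (replacing $P_n$ by $\Re P_n$ on $K$ keeps it monic of degree $n$ and only decreases every $L^p(\mu_K)$ norm), we have $\int_K|P_n|^p\,d\mu_K=\int_{\bb R}|y|^p\,d\nu(y)$. First I would record the only potential–theoretic input. Writing $P_n(x)-w=\prod_{j=1}^n(x-\zeta_j(w))$ and using Frostman's theorem $\int\log|x-\zeta|\,d\mu_K(x)=g_K(\zeta)+\log\ca(K)\ge\log\ca(K)$, the logarithmic potential $V^\nu(w)=\int\log|w-y|\,d\nu(y)$ satisfies, for every $w$,
\begin{align*}
V^\nu(w)=\sum_{j=1}^n\Big(g_K(\zeta_j(w))+\log\ca(K)\Big)\ge n\log\ca(K)=\log\ca(K)^n .
\end{align*}
Thus $\nu$ is a probability measure on $\bb R$ with $V^\nu\ge\log c$ everywhere, where $c=\ca(K)^n$; note that only $g_K\ge0$ was used.

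The heart of the argument is the following comparison principle: among all probability measures $\nu$ on $\bb R$ with $V^\nu\ge\log c$, the quantity $\int|y|^p\,d\nu$ is minimized by the equilibrium measure $\mu_{[-T,T]}$ of the symmetric interval of capacity $c$, where $T=2c$ (its half–length being $2c$ is exactly the source of the factor $2$, as in Schiefermayr's inequality). To prove it I would construct a \emph{positive} measure $\rho$ on $[-T,T]$ whose potential equals $|y|^p$ up to an additive constant $\kappa$ on $[-T,T]$ (and, by growth, stays below it off the interval); then
\begin{align*}
\int|y|^p\,d\nu\ \ge\ \kappa+\int\!\Big(\textstyle\int\log|y-s|\,d\rho(s)\Big)d\nu(y)=\kappa+\int V^{\nu}(s)\,d\rho(s)\ \ge\ \kappa+\log c\,\|\rho\|,
\end{align*}
with equality when $\nu=\mu_{[-T,T]}$, so $\int|y|^p\,d\nu\ge\int|y|^p\,d\mu_{[-T,T]}$. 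Evaluating the right–hand side against the arcsine density $\tfrac{1}{\pi\sqrt{T^2-y^2}}$ gives $\int|y|^p\,d\mu_{[-T,T]}=T^p\,\tfrac2\pi\int_0^1\tfrac{u^p}{\sqrt{1-u^2}}\,du$, whence $W_n^p(\mu_K)\ge 2\big(\tfrac2\pi\int_0^1 u^p(1-u^2)^{-1/2}du\big)^{1/p}$. A routine estimate of this Beta integral then relaxes the bound to the cleaner closed form $2\big(\tfrac{(m!)^2}{(2m)!}\big)^{1/2m}$ with $m=\lceil\tfrac{p}{2(p-1)}\rceil$.

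For $p=2$ the construction of $\rho$ is completely explicit: $y^2$ is, up to a constant, the logarithmic potential of a scaled semicircle density $\propto\sqrt{T^2-s^2}$, which is manifestly nonnegative, and the resulting constant is exactly $\sqrt2$, i.e.\ \eqref{LB2}. For $p\ge2$ the bound $W_n^p(\mu_K)\ge\sqrt2$ then follows at once from $W_n^p\ge W_n^2$ (monotonicity of Widom factors, corresponding to $m=1$), giving \eqref{EqLB2}. It remains to note $S(\mu_K)=1$ (the density of $\mu_K$ with respect to itself is $1$, so $\int\log1\,d\mu_K=0$) and that $2\binom{2m}{m}^{-1/2m}>1$ is equivalent to $\binom{2m}{m}<4^m$, valid for all $m\ge1$; together with \eqref{UniversalLB} this yields the full chain \eqref{EqLB1}. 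Finally, optimality of the constant $2$ at $p=2$ is immediate from the asymptotics \eqref{SzOPRL}: for $\mu=\mu_{[-2,2]}$, the equilibrium measure of $[-2,2]$ (so $S(\mu)=1$), one has $\lim_n[W_n^2(\mu)]^2=2S(\mu)=2$, so no constant exceeding $2$ can hold in the class of equilibrium measures.

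The step I expect to be the main obstacle is the positivity of the representing measure $\rho$ for $|y|^p$ on $[-T,T]$ — equivalently, solving the finite Hilbert transform (airfoil) equation $\mathrm{p.v.}\int_{-T}^T\frac{\rho(s)}{y-s}\,ds=p\,\mathrm{sgn}(y)\,|y|^{p-1}$ and verifying that its solution is a nonnegative measure. This is precisely where the hypothesis $p>1$ enters: for $p>1$ the right–hand side is continuous and the inversion yields $\rho\ge0$, while for $p\le1$ positivity (and hence any improvement over the universal bound \eqref{UniversalLB}) breaks down, consistent with the role of $p>1$ in the statement.
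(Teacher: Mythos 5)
Your reduction to a pushforward problem is sound as far as it goes: replacing $P_n$ by $\Re P_n$, setting $\nu=(P_n)_*\mu_K$, and using Frostman's theorem to get $V^\nu\ge n\log\ca(K)$ everywhere is correct (it is the same potential-theoretic input as the paper's Theorem \ref{ULBthm}). The genuine gap is the comparison principle, and specifically the step you yourself flag as the main obstacle: the existence of a \emph{nonnegative} $\rho$ on $[-T,T]$ with $U^\rho=|y|^p-\kappa$ on $[-T,T]$ and $U^\rho\le|y|^p-\kappa$ on all of $\bb R$. You assert, without proof, that "the inversion yields $\rho\ge0$" for all $p>1$; but if that lemma held, your chain would establish the sharp arcsine bound $\big[W_n^p(\mu_K)\big]^p\ge\f{2^p}{\sqrt\pi}\f{\Gamma(\f{p+1}2)}{\Gamma(\f p2+1)}$ for all $p>1$ — which is precisely the conjecture the authors state as \emph{open} in Problem 1 (cf. \eqref{super}). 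So the unproved positivity/domination lemma is exactly where the difficulty of the problem lives, and the proposal cannot stand as a proof of Theorem \ref{EqLBthm} without it. Two concrete unresolved points: (i) in the airfoil inversion you must take the solution bounded at $\pm T$ (available here since the data $p|s|^{p-1}\mathrm{sgn}(s)$ is odd), because the generic solution carries a $(T^2-y^2)^{-1/2}$ endpoint singularity, which forces $\f{d}{dy}U^\rho\to+\infty$ as $y\downarrow T$ while $\f{d}{dy}|y|^p$ stays bounded, so $U^\rho>|y|^p-\kappa$ just outside the interval and the domination fails; even for the bounded solution, the exterior domination requires an argument, not "by growth"; (ii) the concluding "routine estimate" relaxing the Beta-integral constant to $2\big(\f{(m!)^2}{(2m)!}\big)^{\f1{2m}}$ with $m=\lceil\f p{2(p-1)}\rceil$ is never carried out, and your heuristic that $p>1$ is where positivity breaks is not substantiated (the role of $p>1$ in the paper is entirely different: it makes the H\"older conjugate exponent $2m$ finite via $p=2m/(2m-1)$).

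It is worth noting what does survive: for $p=2$ your scheme closes completely and differs from the paper's proof. There $|y|^2-\kappa$ is the potential of an explicit semicircle density $\propto\sqrt{T^2-s^2}$, positivity is manifest, the exterior domination reduces to the elementary inequality $t\le\f12\sinh 2t$, and the arcsine second moment $T^2/2$ with $T=2\ca(K)^n$ yields \eqref{LB2} for every non-polar compact $K\subset\bb R$ in one stroke, with \eqref{EqLB2} following by monotonicity of $W_n^p$ in $p$; sharpness via \eqref{SzOPRL} is handled as in the paper. By contrast, the paper proves \eqref{EqLB1} for all $p>1$ by working on finite gap sets with the uniformization map and Blaschke product $B$: it writes $2\ca(K)^n=\int_0^{2\pi}Q_n(\x(e^{i\te}))(B^n+\ol{B^n})\,\f{d\te}{2\pi}$, applies H\"older with exponents $(p,2m)$, expands $(B^n+B^{-n})^{2m}$ and uses $\int B^k\,\f{d\te}{2\pi}=B(0)^k=0$ to isolate the central binomial coefficient, then extends to general $K$ by exterior approximation with finite gap sets — thereby obtaining a weaker-than-conjectured but provable constant and sidestepping the positivity problem your approach runs into.
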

\begin{proof}
First, note that \eqref{EqLB2} is a special case of \eqref{EqLB1}. Also note that  $m\ge p/[2(p-1)]$ which is equivalent to $p\ge 2m/(2m-1)$. Since $W_n^p(\mu_K)$ is nondecreasing with respect to $p$ it suffices to prove \eqref{EqLB1} for $p=2m/(2m-1)$, $m\in\bb N$.

Next, we prove \eqref{EqLB1} in the special case of a finite gap compact set $K\subset\bb R$. In this setting we recall the uniformization map for finite gap sets as discussed in \cite{CSZ10} or Sections 9.5--9.7 in \cite{Sim11}. The uniformization map is a unique conformal map $\x:\bb D\to\ol{\bb C}\bs K$ normalized by $\x(0)=\infty$ and $\lim_{z\to0}z\x(z)>0$. It is known that $\x$ is symmetric under complex conjugation, $\x(\bar z)=\ol{\x(z)}$, has an analytic extension to $\ol{\bb C}\bs\Lambda$, where $\Lambda\subset\pd\bb D$ is a certain null set, and $\x:\pd\bb D\bs\Lambda\to K$ preserves the equilibrium measure (cf., Corollary 4.6 in \cite{CSZ10} or Theorem 9.7.6 in \cite{Sim11}),
\begin{align}
\label{HarmMesInvar}
\int_K f(x)d\mu_K(x) = \int_0^{2\pi} f(\x(e^{i\te}))\f{d\te}{2\pi}, \quad f\in L^1(d\mu_K).
\end{align}
In the following we will also need the associated Blaschke product $B(z)$ which is the unique bounded analytic function on $\bb D$ with $|B(e^{i\te})|=1$ a.e.\ on $\pd\bb D$, zeros at $\x^{-1}(\infty)$, and normalized by $\lim_{z\to0}z^{-1}B(z)>0$. By Theorem 4.4 in \cite{CSZ10} or Theorem 9.7.5 in \cite{Sim11} the Blaschke product $B(z)$ has a connection to the Green function $G_K(z)$ of the domain $\ol{\bb C}\bs K$ via $|B(z)|=\exp[-G_K(\x(z))]$ and it satisfies (cf. (9.7.35) and (9.7.37) in \cite{Sim11})
\begin{align}
\label{xBcap}
\lim_{z\to0}\x(z)B(z)=\ca(K).
\end{align}

Now consider an arbitrary monic polynomial $P_n(x)$ of degree $n$, and let $Q_n(x)=\Re(P_n(x))$, $x\in\bb R$. Then $Q_n(x)$ is a monic polynomial with coefficients given by the real parts of the coefficients of $P_n(x)$, $Q_n(x)$ is real-valued on $K$, and satisfies $\|P_n\|_p\ge\|\Re(P_n)\|_p=\|Q_n\|_p$. In addition, $B(z)^nQ_n(\x(z))$ has only removable singularities on $\bb D$ and hence can be identified with a bounded analytic function with $\lim_{z\to0}B(z)^nQ_n(\x(z))=\ca(K)^n$ by \eqref{xBcap}. Thus,
\begin{align*}
C(K)^n &= \int_0^{2\pi} Q_n(\x(e^{i\te}))B(e^{i\te})^n \f{d\te}{2\pi}.
\end{align*}
Since the complex conjugation does not change the LHS and $Q_n(\x(e^{i\te}))$ we have
\begin{align*}
2C(K)^n &= \int_0^{2\pi} Q_n(\x(e^{i\te}))\big(B(e^{i\te})^n+\ol{B(e^{i\te})^n}\,\big) \f{d\te}{2\pi}.
\end{align*}
Applying H\"older's inequality, \eqref{HarmMesInvar}, and noting that $B(e^{i\te})^{-1}=\ol{B(e^{i\te})}$ we obtain
\begin{align}
2C(K)^n &\le \left[\int_0^{2\pi} |Q_n(\x(e^{i\te}))|^p \f{d\te}{2\pi}\right]^{\f1p}
\left[\int_0^{2\pi}\big(B(e^{i\te})^n+B(e^{i\te})^{-n}\big)^{2m} \f{d\te}{2\pi}\right]^{\f1{2m}}
\no \\
&= \left[\int_K|Q_n(x)|^p d\mu_K(x)\right]^{\f1p}
\left[\int_0^{2\pi}\sum_{j=0}^{2m}{2m\choose j}B(e^{i\te})^{2n(m-j)} \f{d\te}{2\pi}\right]^{\f1{2m}}
\no \\
&= \|Q_n\|_p\left[{2m\choose m}+2\Re\sum_{j=0}^{m-1}{2m\choose j}\int_0^{2\pi} B(e^{i\te})^{2n(m-j)} \f{d\te}{2\pi}\right]^{\f1{2m}}.
\end{align}
Since $\int_0^{2\pi}B(e^{i\te})^k \f{d\te}{2\pi} = B^k(0)=0$ for all $k\in\bb N$ and $\|Q_n\|_p\le \|P_n\|_p$ we get
\begin{align}
2C(K)^n &\le \|P_n\|_p {2m\choose m}^{\f1{2m}} = \|P_n\|_p \left(\f{(2m)!}{(m!)^2}\right)^{\f1{2m}}
\end{align}
which after rearranging yields \eqref{EqLB1} for finite gap sets $K\subset\bb R$.

Finally, we extend \eqref{EqLB1} to general non-polar compact sets $K\subset\bb R$ via an approximation argument of \cite{Alp17}. By Theorem 5.8.4 in \cite{Sim11} there exist finite gap sets $\{K_j\}_{j=1}^\infty$ such that $K\subset\dots\subset K_{j+1}\subset K_j\subset\dots\subset K_1\subset\bb R$, $K=\cap_{j=1}^\infty K_j$, $\ca(K_j)\to\ca(K)$, and $d\mu_{K_j}\to d\mu_K$ in the weak star sense as $j\to\infty$. Then for every monic polynomial $P_n(x)$ of degree $n$ we have by the finite gap lower bound that
\begin{align}
\|P_n\|_{p,\mu_K} &= \liminf_{j\to\infty}\|P_n\|_{p,\mu_{K_j}} \ge
\liminf_{j\to\infty} W_n^p(\mu_{K_j})\ca(K_j)^n \no
\\
&\ge
2\left(\f{(m!)^2}{(2m)!}\right)^{\f1{2m}} \liminf_{j\to\infty}\ca(K_j)^n
= 2\left(\f{(m!)^2}{(2m)!}\right)^{\f1{2m}} \ca(K)^n.
\end{align}
Dividing by $\ca(K)^n$ yields \eqref{EqLB1} for arbitrary non-polar compact set $K\subset\bb R$.

In the case $K=[-2,2]$ the orthogonal polynomials with respect to the equilibrium measure $\mu_K$ are the Chebyshev polynomials of the first kind and a straightforward computation shows that equality in \eqref{EqLB2} is attained for all $n\in\mathbb{N}$ proving that the lower bound \eqref{EqLB2} is sharp.
\end{proof}

\section{Lower bounds for the Jacobi weights}

Let $K=[-1,1]$ and consider the normalized Jacobi weights,
\begin{align}
d\mu_{\al,\be}(x) = c_{\al,\be}(1-x)^\al(1+x)^\be\chi_K(x)dx,
\end{align}
where $\al,\be>-1$ are parameters and $c_{\al,\be}$ is a normalization constant such that $\mu_{\al,\be}(K)=1$. We denote the corresponding monic orthogonal polynomials by $P_n^{\al,\be}$. By \cite[Section VII.1, Equation (25)]{Sue79},
\begin{align} \label{JacNorm}
\|P_n^{\al,\be}\|_2^2 = c_{\al,\be}\f{2^{\al+\be+2n+1}n!}{\al+\be+2n+1} \f{\Gamma(\al+n+1)\Gamma(\be+n+1)\Gamma(\al+\be+n+1)}{\Gamma(\al+\be+2n+1)^2}.
\end{align}
The equilibrium measure on $K=[-1,1]$ is given by $d\mu_K(x)=\f1\pi\f{\chi_K(x)}{\sqrt{1-x^2}}dx$ hence
$d\mu_{\al,\be}(x) = c_{\al,\be}\pi(1-x)^{\al+\f12}(1+x)^{\be+\f12}d\mu_K(x)$. Using Frostman's theorem and noting that $\ca(K)=\f12$ we get
\begin{align} \label{JacEntropy}
S(\mu_{\al,\be}) = c_{\al,\be}\pi\ca(K)^{\al+\be+1} = \f{c_{\al,\be}\pi}{2^{\al+\be+1}}.
\end{align}

Now, consider the ratios $R_n = \left[W_n^2(\mu_{\al,\be})\right]^2/S(\mu_{\al,\be})$. By \eqref{JacNorm} and \eqref{JacEntropy} we have
\begin{align}
R_n =
\f{2^{2\al+2\be+4n+2}n!}{\pi(\al+\be+2n+1)} \f{\Gamma(\al+n+1)\Gamma(\be+n+1)\Gamma(\al+\be+n+1)}{\Gamma(\al+\be+2n+1)^2}.
\end{align}
The optimal constant in the lower bound for $\left[W^2_n(\mu_{\al,\be})\right]^2$ is given by $\inf_n R_n$. Thus, we are interested in finding the parameters for which $\inf_n R_n$ is maximal. While estimating $\inf_n R_n$ directly is difficult, we can find values of the parameters $\al,\be$ so that the sequence $R_n$ is decreasing. In this case the improved lower bound \eqref{LB2} follows from the Szeg\H{o} asymptotics \eqref{SzOPRL}. In the other extreme, if $R_n$ is strictly increasing then, by \eqref{SzOPRL}, the optimal constant in the lower bound is strictly less than $2$ and is given by $R_1$.

Define the quantities
\begin{align}
D_n = \f{R_{n+1}}{R_n} - 1, \quad n\in\bb N.
\end{align}
Then the sequence $R_n$ is decreasing if and only if $D_n\le0$ for all $n$ and $R_n$ is strictly increasing if and only if $D_n>0$ for all $n$.
Using the identity $\Gamma(x+1)=x\Gamma(x)$ and introducing $s_n=\al+\be+2(n+1)$ we obtain
\begin{align}
D_n &= \f{16(n+1)(\al+n+1)(\be+n+1)(\al+\be+n+1)} {(\al+\be+2n+1)(\al+\be+2n+2)^2(\al+\be+2n+3)} - 1
\no \\
&= \f{[s_n^2-(\al+\be)^2][s_n^2-(\al-\be)^2]}{s_n^2(s_n^2-1)} - 1
\label{JacD1}
\\
&= \f{(\al^2-\be^2)^2+s_n^2[1-2(\al^2+\be^2)]}{s_n^2(s_n^2-1)}.
\label{JacD2}
\end{align}
Then for all $n\in\bb N$, we have $D_n\le0$ if $|\al|+|\be|\ge1$ by \eqref{JacD1} and $D_n>0$ if $\al^2+\be^2\le\f12$ and $|\al|+|\be|<1$ by \eqref{JacD2}. Since $s_n\to\infty$ as $n\to\infty$ it is clear from \eqref{JacD2} that when $\al^2+\be^2>\f12$ there exists $n_0$ such that $D_n<0$ for all $n\ge n_0$ and $D_n\geq 0$ for $1\leq n \leq n_0-1$ provided that $n_0>1$. Since $\lim_{n\to\infty}R_n=2$ by \eqref{SzOPRL}, it follows that for all values of $\al,\be$ the infimum of $R_n$ is equal to $\min\{2,R_1\}$. In addition, since $s_n>2$ we can estimate $D_n$ for $\al^2+\be^2\ge\f12$ by
\begin{align}
D_n \le \f{(\al^2+\be^2)^2+4[1-2(\al^2+\be^2)]}{s_n^2(s_n^2 -1)}
\end{align}
which implies that $D_n<0$ if $4-2\sqrt3<\al^2+\be^2<4+2\sqrt3$. Combining  these special cases we get the following result:
\begin{theorem}
For all $\al,\be>-1$ we have
\begin{align}
\left[W^2_n(\mu_{\al,\be})\right]^2 \ge L_{\al,\be}\, S(\mu_{\al,\be}), \quad n\in\bb N,
\end{align}
with the optimal constant $L_{\al,\be}$ given by
\begin{align}
L_{\al,\be} = \min\left\{2,\f{2^{2\al+2\be+6}}{\pi(\al+\be+3)} \f{\Gamma(\al+2)\Gamma(\be+2)\Gamma(\al+\be+2)}{\Gamma(\al+\be+3)^2}\right\}.
\end{align}
In addition, if $\al^2+\be^2\le\f12$ and $|\al|+|\be|<1$ then $L_{\al,\be}<2$ and if either $|\al|+|\be|\ge1$ or $\al^2+\be^2>4-2\sqrt3\approx0.536$ then $L_{\al,\be}=2$, that is, \eqref{LB2} holds. In particular, in the symmetric case $\al=\be$ the lower bound \eqref{LB2} holds if and only if $|\al|\ge\f12$.
\end{theorem}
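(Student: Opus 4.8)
The plan is to identify the optimal constant $L_{\al,\be}$ with $\inf_n R_n$ and then read off its value and the full classification from the sign analysis of $D_n$ carried out above. Since the monic orthogonal polynomials $P_n^{\al,\be}$ minimize the $L^2(\mu_{\al,\be})$ norm, we have $\big[W^2_n(\mu_{\al,\be})\big]^2 = \|P_n^{\al,\be}\|_2^2/\ca(K)^{2n}$, so that $R_n = \big[W^2_n(\mu_{\al,\be})\big]^2/S(\mu_{\al,\be})$ and the best constant for which $\big[W^2_n(\mu_{\al,\be})\big]^2 \ge L\,S(\mu_{\al,\be})$ holds for every $n$ is precisely $L_{\al,\be} = \inf_n R_n$. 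First I would invoke the consequence established above, namely that the sign pattern of $D_n$ together with the limit $R_n \to 2$ from \eqref{SzOPRL} yields $\inf_n R_n = \min\{2,R_1\}$ for all $\al,\be > -1$. Substituting $n=1$ into the formula for $R_n$ produces the stated closed form for $R_1$, and hence $L_{\al,\be} = \min\{2,R_1\}$.

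Next I would settle the two extreme regimes that decide whether this minimum equals $2$ or is strictly smaller. If $\al^2+\be^2 \le \tfrac12$ and $|\al|+|\be| < 1$, then $D_n > 0$ for every $n$ by \eqref{JacD2}, so $R_n$ is strictly increasing and $L_{\al,\be} = R_1 < \lim_n R_n = 2$. Conversely, if $|\al|+|\be| \ge 1$, then $D_n \le 0$ for all $n$ by \eqref{JacD1}, so $R_n$ is nonincreasing, $\inf_n R_n = 2$, and therefore $L_{\al,\be} = 2$; this is \eqref{LB2}.

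The remaining condition $\al^2+\be^2 > 4-2\sqrt3$ I would split into two subcases. For $4-2\sqrt3 < \al^2+\be^2 < 4+2\sqrt3$ the displayed estimate on $D_n$, valid once $\al^2+\be^2 \ge \tfrac12$, has a strictly negative numerator, so $D_n < 0$ for all $n$ and again $L_{\al,\be} = 2$. For $\al^2+\be^2 \ge 4+2\sqrt3$ I would reduce to the previous regime via the elementary inequality $(|\al|+|\be|)^2 = \al^2+\be^2+2|\al\be| \ge \al^2+\be^2 \ge 4+2\sqrt3 > 1$, which gives $|\al|+|\be| > 1$ so that \eqref{JacD1} applies. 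Combining the two subcases, $\al^2+\be^2 > 4-2\sqrt3$ forces $L_{\al,\be} = 2$.

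Finally, for the symmetric case $\al=\be$ I would specialize the dichotomy, using $|\al|+|\be| = 2|\al|$ and $\al^2+\be^2 = 2\al^2$. If $|\al| \ge \tfrac12$ then $|\al|+|\be| \ge 1$, so $L_{\al,\be} = 2$ and \eqref{LB2} holds; if $|\al| < \tfrac12$ then $\al^2+\be^2 = 2\al^2 < \tfrac12$ and $|\al|+|\be| < 1$, so $L_{\al,\be} = R_1 < 2$ and \eqref{LB2} fails. Thus \eqref{LB2} holds precisely when $|\al| \ge \tfrac12$. The one genuinely delicate step in the whole argument is the identity $\inf_n R_n = \min\{2,R_1\}$ in the mixed regime $\al^2+\be^2 > \tfrac12$ with $n_0 > 1$: there $R_n$ first increases up to $R_{n_0}$ and then strictly decreases to its limit $2$, so its infimum is attained either at the first term $R_1$ or in the limit, and no interior index can undercut $\min\{R_1,2\}$.
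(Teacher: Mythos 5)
Your proposal is correct and takes essentially the same route as the paper, whose ``proof'' of this theorem is precisely the discussion preceding it: the identification $L_{\al,\be}=\inf_n R_n=\min\{2,R_1\}$ via the sign analysis of $D_n$ in \eqref{JacD1}--\eqref{JacD2}, the limit $R_n\to2$ from \eqref{SzOPRL}, and the estimate on $D_n$ for $\al^2+\be^2\ge\f12$. Your only addition is to make explicit the subcase $\al^2+\be^2\ge4+2\sqrt3$, reduced to the regime $|\al|+|\be|\ge1$ via $(|\al|+|\be|)^2\ge\al^2+\be^2>1$, a small step the paper leaves implicit when it ``combines the special cases.''
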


\section{Lower bounds for measures from the isospectral tori}

For a finite gap set
\begin{align}
\label{FGSet}
K=[\al_1,\be_1]\cup\dots\cup[\al_{\ell+1},\be_{\ell+1}]
\end{align}
with $\al_1<\be_1<\al_2<\dots<\al_{\ell+1}<\be_{\ell+1}$, the isospectral torus $\cc T_K$ consists of two sided Jacobi matrices $J=\{a_k,b_k\}_{k=-\infty}^\infty$ with the spectrum $\si(J)=K$ which are reflectionless on $K$, that is, the diagonal Green functions $G_{n,n}(z)=\<\de_n,(J-z)^{-1}\de_n\>$, $n\in\bb Z$, of $J$ have purely imaginary boundary values a.e.\ on $K$, see for example Sections 5.13 and 7.5 in \cite{Sim11}. By Craig's formula (cf., Theorem 5.4.19 in \cite{Sim11}), the diagonal Green functions $G_{n,n}(z)$ of reflectionless Jacobi matrices are of the form
\begin{align}
\label{CraigFormula}
G_{n,n}(z) = -\prod_{j=1}^\ell(z-\ga_{n,j}) \left[\prod_{j=1}^{\ell+1}(z-\al_j)(z-\be_j)\right]^{-1/2}, \quad n\in\bb Z,
\end{align}
where $\ga_{n,j}\in[\be_j,\al_{j+1}]$, $j=1,\dots,\ell$, $n\in\bb Z$.

In this section we investigate the Widom factors for the spectral measure $\mu_n$ of the one-sided truncation $J_n=\{a_{n+k},b_{n+k}\}_{k=1}^\infty$ of $J\in\cc T_K$. Alternatively, such one-sided Jacobi matrices $J_n$ are characterized by the property of the associated $m$-function $m_n(z)=\<\de_1,(J_n-z)^{-1},\de_1\>$ being a minimal Herglotz function on the two sheeted Riemann surface with branch cuts along $K$ (cf., Theorems 5.13.10, 5.13.12, and 7.5.1 in \cite{Sim11}). The minimal Herglotz functions are characterized by Theorem 5.13.2 in \cite{Sim11} which implies that the spectral measures $\mu_n$ consist of an absolutely continuous component on $K$ and a finite number of mass points at the discrete eigenvalues of $J_n$, $\si_d(J_n)\subset\bb R\bs K$ (cf., (5.13.19), (5.13.24), (5.13.25) in \cite{Sim11}),
\begin{align}
d\mu_n(x) = \f1\pi\Im[m_n(x+i0)]\chi_K(x)dx + \sum_{\la\in\si_d(J)}\mathrm{res}_{z=\la}[m_n(z)]d\de_\la(x).
\end{align}
There is a connection between $G_{n,n}$ and $m_n$ obtained in the proof of Theorem 5.13.12 in \cite{Sim11},
\begin{align}
\label{mGrelation}
\Im[a_n^2m_n(x+i0)]=\f12\Im[-G_{n,n}(x+i0)^{-1}] \text{ for a.e. } x\in K,
\end{align}
and the zeros of $G_{n,n}(z)$ correspond to the poles of $m_n$ on either the first or the second sheet of the Riemann surface, hence $\si_d(J_n)$ is a subset of $\{\ga_{n,j}\}_{j=1}^\ell$, the zero set of $G_{n,n}$. Thus, using \eqref{CraigFormula}, \eqref{mGrelation}, and (5.13.24), (5.13.25) in \cite{Sim11} we get an explicit form of $\mu_n$,
\begin{align}
\label{IsoMeas1}
d\mu_n(x) =& \f1{2a_n^2\pi} \f{\sqrt{\prod_{j=1}^{\ell+1}|x-\al_j||x-\be_j|}} {\prod_{j=1}^\ell|x-\ga_{n,j}|} \chi_K(x)dx \no
\\ &+
\sum_{k:\ga_{n,k}\in\si_d(J_n)}\f1{a_n^2} \f{\sqrt{\prod_{j=1}^{\ell+1}|x-\al_j||x-\be_j|}} {\prod_{j=1,j\neq k}^\ell|\ga_{n,k}-\ga_{n,j}|}d\de_{\ga_{n,k}}(x).
\end{align}
By Theorem 5.5.22 and $(5.4.96)$ in \cite{Sim11}), the equilibrium measure $\mu_K$ of a finite gap set $K$ is given by
\begin{align}
\label{EqMes}
d\mu_K(x) = \f1\pi\f{\prod_{j=1}^\ell|x-c_j|} {\sqrt{\prod_{j=1}^{\ell+1}|x-\al_j||x-\be_j|}}\chi_K(x)dx,
\end{align}
where $c_j\in(\be_j,\al_{j+1})$, $j=1,\dots,\ell$, are the critical points of the Green function $G_K(z)$ for the domain $\ol{\bb C}\bs K$ with a logarithmic pole at infinity. Combining \eqref{IsoMeas1} and \eqref{EqMes} then gives the Lebesgue decomposition of $\mu_n$ with respect to $\mu_K$,
\begin{align}
\label{IsoMeas2}
d\mu_n(x) =& \f{1}{2a_n^2} \f{\prod_{j=1}^{\ell+1}|x-\al_j||x-\be_j|}
{\prod_{j=1}^\ell|x-c_j||x-\ga_{n,j}|}d\mu_K(x) \no
\\ &+
\sum_{k=1}^\ell\f{s_{n,k}}{a_n^2} \f{\sqrt{\prod_{j=1}^{\ell+1}|x-\al_j||x-\be_j|}} {\prod_{j=1,j\neq k}^\ell|\ga_{n,k}-\ga_{n,j}|}d\de_{\ga_{n,k}}(x),
\end{align}
where $s_{n,k}=1$ if $\ga_{n,k}\in\si_d(J_n)$ and $s_{n,k}=0$ otherwise. The factor $a_n^{-2}$ plays a role of the normalization constant and hence is uniquely determined by $\{\ga_{n,j},s_{n,k}\}_{j=1}^\ell$. By Theorems 5.13.5 and 7.5.1 in \cite{Sim11}, the class of such measures $\mu_0$ as $J$ runs through $\cc T_K$ consists of all possible choices of $\ga_{0,j}\in[\be_j,\al_{j+1}]$ and $s_{0,j}\in\{0,1\}$ with $s_{0,k}=0$ if $\ga_{0,j}$ is at an edge $\be_j$ or $\al_{j+1}$, $j=1,\dots,\ell$.

\begin{theorem}
Let $K\subset\bb R$ be a finite gap set and $\mu_0$ be the spectral measure of a half-line truncation $J_0$ of $J\in\cc T_K$, that is, $\mu_0$ is of the form \eqref{IsoMeas2}. Then
\begin{align}
\label{IsoLB}
\left[W_n^2(\mu_0)\right]^2 \ge 2 E(\mu_0)^2 S(\mu_0), \quad n\in\bb N,
\end{align}
where $E(\mu_0)$ is the eigenvalue function given by
\begin{align}
E(\mu_0) = \exp\Bigg[\sum_{x\in\supp(\mu_0)\bs K}G_K(x)\Bigg].
\end{align}
In particular, since $E(\mu_0)\ge1$, the improved lower bound \eqref{LB2} holds.
\end{theorem}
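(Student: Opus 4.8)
The plan is to transplant the proof of Theorem~\ref{EqLBthm} to the uniformizing disk, now carrying along a Szeg\H o function for the absolutely continuous weight and Blaschke factors for the eigenvalues; since $K$ is already a finite gap set, no approximation step is needed. Recall from \eqref{HarmMesInvar}--\eqref{xBcap} the uniformization map $\x:\bb D\to\ol{\bb C}\bs K$ and the Blaschke product $B$ with $|B(z)|=\exp[-G_K(\x(z))]$ and $\lim_{z\to0}\x(z)B(z)=\ca(K)$. Writing the decomposition \eqref{IsoMeas2} as $d\mu_0=f\,d\mu_K+\sum_k w_k\,d\de_{\ga_{0,k}}$, I would introduce the outer function $D$ on $\bb D$ determined by $|D(e^{i\te})|^2=f(\x(e^{i\te}))$ and normalized by $D(0)>0$; then \eqref{HarmMesInvar} and Frostman's theorem give $\log|D(0)|=\f12\int\log f\,d\mu_K$, so $|D(0)|^2=S(\mu_0)$. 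The eigenvalues $\ga_{0,k}\in\si_d(J_0)$ correspond to points $z_k=\x^{-1}(\ga_{0,k})\in\bb D\cap\bb R$, and since $|B(z_k)|=\exp[-G_K(\ga_{0,k})]$ we have $E(\mu_0)=\prod_k|B(z_k)|^{-1}$, so the target \eqref{IsoLB} reads $\big[W_n^2(\mu_0)\big]^2\ge 2\,|D(0)|^2\prod_k|B(z_k)|^{-2}$.

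Next I would reduce to real polynomials as in Theorem~\ref{EqLBthm}: for monic $P_n$ set $Q_n=\Re P_n$, so $\|P_n\|_2\ge\|Q_n\|_2$ and $V(z):=B(z)^nQ_n(\x(z))$ extends analytically to $\bb D$ with $V(0)=\ca(K)^n$ and $V(\bar z)=\ol{V(z)}$. Since $|B|=1$ on $\pd\bb D$ and $Q_n(\x(z_k))=V(z_k)B(z_k)^{-n}$, the norm becomes
\begin{align*}
\|Q_n\|_2^2=\int_0^{2\pi}|V(e^{i\te})|^2 f(\x(e^{i\te}))\,\f{d\te}{2\pi}+\sum_k w_k|B(z_k)|^{-2n}\,|V(z_k)|^2 .
\end{align*}
I would regard the right-hand side as $\|V\|_{\cc H}^2$ in the reproducing kernel space $\cc H$ obtained by adjoining the point evaluations at the $z_k$ (with weights $w_k|B(z_k)|^{-2n}$) to the weighted Hardy space $H^2\big(f(\x(\cdot))\f{d\te}{2\pi}\big)$, and then bound $\|V\|_{\cc H}$ below by pairing $V$ with a suitable test element $\Phi\in\cc H$ through Cauchy--Schwarz.

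The element $\Phi$ would be built from $B^nD$, which reproduces $V(0)D(0)=\ca(K)^nD(0)$ via the mean value property, together with Blaschke factors $b_k(z)=\f{z-z_k}{1-z_kz}$ inserted to interact with the point masses; symmetrizing $\Phi$ against $\ol{\Phi(\bar z)}$ and using that $Q_n(\x(e^{i\te}))$ is real is what supplies the extra factor $2$, exactly as in Theorem~\ref{EqLBthm}. Carried through, the pairing should reproduce the value $2\,\ca(K)^n|D(0)|\prod_k|B(z_k)|^{-1}$ while $\|\Phi\|_{\cc H}^2=2$, which upon rearranging gives \eqref{IsoLB}; one then notes $E(\mu_0)\ge1$ because each $G_K(\ga_{0,k})\ge0$, so \eqref{LB2} follows. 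I expect the main obstacle to be precisely this last step: with a nonconstant weight the symmetrized test function produces a cross term $\int_0^{2\pi}B(e^{i\te})^{2n}\,D(e^{i\te})/\ol{D(e^{i\te})}\,\f{d\te}{2\pi}$ that, unlike the equilibrium case $D\equiv1$ of Theorem~\ref{EqLBthm}, does not vanish by itself. The heart of the argument is therefore to show that the contributions of the point evaluations at the $z_k$ are exactly what cancel this cross term while simultaneously promoting $S(\mu_0)$ to $S(\mu_0)E(\mu_0)^2$; equivalently, one must compute the reproducing kernel of $\cc H$ at the origin, and it is here that the explicit finite gap structure \eqref{IsoMeas2}, including the normalization $a_0^{-2}$ and the interplay between the outer Szeg\H o function and the eigenvalue Blaschke products, must be used.
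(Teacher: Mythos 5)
Your reductions are all sound as far as they go: $|D(0)|^2=S(\mu_0)$ via \eqref{HarmMesInvar} and Frostman, $E(\mu_0)=\prod_k|B(z_k)|^{-1}$, and $V=B^nQ_n(\x)$ analytic with $V(0)=\ca(K)^n$ faithfully transplant the setup of Theorem~\ref{EqLBthm}. But the proof stops exactly where the theorem lives. You never construct the test element $\Phi$ nor compute the reproducing kernel of $\cc H$ at the origin; you yourself flag that cancelling the cross term $\int_0^{2\pi}B(e^{i\te})^{2n}D(e^{i\te})/\ol{D(e^{i\te})}\,\f{d\te}{2\pi}$ while promoting $S(\mu_0)$ to $E(\mu_0)^2S(\mu_0)$ is ``the heart of the argument'' --- and that is precisely the content of \eqref{IsoLB}, left unproved. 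This is not a routine verification one can wave through: the sharpness result \eqref{ULB-Sharp} shows that for general Szeg\H{o}-class measures on an interval no factor beyond $1$ is available, so any such $\Phi$ with $\|\Phi\|_{\cc H}^2=2$ and pairing $2\ca(K)^n|D(0)|E(\mu_0)$ cannot exist generically; its existence would have to be extracted from the specific $\f1{2a_0^2}$-structure of \eqref{IsoMeas2}, and your sketch gives no mechanism for that. A further obstruction: for $\ell\ge1$ gaps, $\x$ is the universal covering map, not injective, so $\x^{-1}(\ga_{0,k})$ is a full orbit of the underlying Fuchsian group, $D$ is only character automorphic, and single Blaschke factors $b_k(z)=\f{z-z_k}{1-z_kz}$ are not automorphic at all. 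Theorem~\ref{EqLBthm} evades this machinery because it only needs $\int_0^{2\pi}B(e^{i\te})^k\f{d\te}{2\pi}=B(0)^k=0$, which is orbit-insensitive; your kernel computations would not.

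The paper's proof takes an entirely different route that bypasses the disk. It invokes the step-by-step sum rule of \cite{CSZ11}, which gives the exact identity $W_n^2(\mu_0)=a_1\cdots a_n/\ca(K)^n=E(\mu_0)S(\mu_0)^{1/2}\big/\big(E(\mu_n)S(\mu_n)^{1/2}\big)$ with $\mu_n$ the spectral measure of the $n$-times stripped matrix $J_n$, and then exploits the structural fact that every such $\mu_n$ again has the explicit form \eqref{IsoMeas2}. A direct computation with Frostman's theorem yields $S(\mu_n)=\f{\ca(K)^2}{2a_n^2}\exp\big[-\sum_j G_K(c_j)-\sum_j G_K(\ga_{n,j})\big]$ --- the factor $2$ in \eqref{IsoLB} comes straight from the $\f1{2a_n^2}$ normalization --- while $E(\mu_n)\le\exp\big[\sum_j G_K(\ga_{n,j})\big]$ and the maximality of $G_K$ at the critical points $c_j$ give $E(\mu_n)^2S(\mu_n)\le\ca(K)^2/(2a_n^2)$; the stray $a_n^2$ then cancels against $a_1^2\cdots a_n^2$. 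If you wished to push your duality scheme through, you would in effect have to re-derive this sum rule as a kernel identity in the character-automorphic Hardy space, which is substantially harder than the paper's argument. As it stands, the proposal is a plausible program with the decisive step missing, not a proof.
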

\begin{proof}
The proof will be based on the step-by-step sum rule of \cite{CSZ11}. Let $\mu_n$ denote the spectral measure of the one-sided truncation $J_n$ of $J$, $n\ge1$.
Then, by Theorem 4.2 in \cite{CSZ11} or Proposition 9.10.5 in \cite{Sim11}, we have
\begin{align}
\label{SumRule}
W_n^2(\mu_0) = \f{a_1\cdots a_n}{\ca(K)^n} = \f{E(\mu_0)S(\mu_0)^{1/2}}{E(\mu_n)S(\mu_n)^{1/2}}.
\end{align}
Since in each gap of $K$ the Green function $G_K(x)$ is positive and attains its maximal value at the critical points we have the estimate
\begin{align}
\label{Ebound}
1 \le E(\mu_n) \le \exp\left[\sum_{j=1}^{\ell}G_K(\ga_{n,j})\right].
\end{align}
Recalling that $G_K(z)=-\log\ca(K)+\int\log|z-x|d\mu_K(x)$, we get from \eqref{IsoMeas2},
\begin{align}
S(\mu_n) &= \f{1}{2a_n^2}\exp\left[\int\log\left(\f{\prod_{j=1}^{\ell+1}|x-\al_j||x-\be_j|} {\prod_{j=1}^{\ell}|x-c_j||x-\ga_{n,j}|}\right)d\mu_K(x)\right] \no
\\&=
\f{\ca(K)^{2}}{2a_n^2}\exp\left[-\sum_{j=1}^{\ell}G_K(c_j) - \sum_{j=1}^{\ell}G_K(\ga_{n,j})\right],
\end{align}
and hence,
\begin{align}
\label{ESbound}
E(\mu_n)^2S(\mu_n) \le \f{\ca(K)^2}{2a_n^2}\exp\left[\sum_{j=1}^{\ell}\big[G_K(\ga_{n,j})-G_K(c_j)\big]\right] \le \f{\ca(K)^2}{2a_n^2}.
\end{align}
Squaring \eqref{SumRule} and using \eqref{ESbound} give
\begin{align}
\f{a_1^2\cdots a_n^2}{\ca(K)^{2n}} = W_n^2(\mu_0)^2 = \f{E(\mu_0)^2S(\mu_0)}{E(\mu_n)^2S(\mu_n)} \ge \f{2a_n^2}{\ca(K)^2} E(\mu_0)^2S(\mu_0).
\end{align}
Cancelling $a_n^2/\ca(K)^2$ term and utilizing \eqref{Ebound} we obtain
$$
\f{a_1^2\cdots a_{n-1}^2}{\ca(K)^{2(n-1)}} = W_{n-1}^2(\mu_0)^2 \ge 2E(\mu_0)^2S(\mu_0), \quad n\in\bb N.
$$
\end{proof}

\section{Open problems}

\textit{Problem 1.} In Theorem \ref{EqLBthm}, the sharp lower bound for $W_n^p(\mu_K)^p$ is obtained for $p=2$. The sharp lower bound for $W_n^p(\mu_K)^p$ when $p\neq 2$ and $K\subset \mathbb{R}$ is an open problem. At least, we have a natural candidate for this lower bound:
It is known that on an interval $K=[-1,1]$ the monic Chebyshev polynomials of the first kind minimize $L^p(\mu_K)$ norms for all $1\leq p\le\infty$ (see for example p.\ 96 in \cite{Riv90}), hence the corresponding Widom factors can be evaluated explicitly in this case,
\begin{align}\label{super}
\big[W_n^p(\mu_K)\big]^p= \frac{2^p}{\pi} \int_{0}^{\pi} | \cos{\theta}|^p\, d\theta= \frac{2^p}{\sqrt\pi} \frac{\Gamma(\f{p+1}2)}{\Gamma(\f{p}2+1)}, \quad n\in\bb N.
\end{align}
Note that the right hand side of \eqref{super} is independent of $n$. When $p=2$ \eqref{super} gives the sharp lower bound \eqref{LB2} and the limit as $p\to\infty$ of the $p$-th root of \eqref{super} gives the sharp lower bound \eqref{ChebLB2}. We conjecture that
\begin{align*}
\big[W_n^p(\mu_K)\big]^p \geq  \frac{2^p}{\sqrt\pi} \frac{\Gamma(\f{p+1}2)}{\Gamma(\f{p}2+1)}, \quad n\in\bb N,
\end{align*}
when $K$ is a non-polar compact subset of $\mathbb{R}$ and $1\leq p <\infty$.

\textit{Problem 2.} Let $K$ be a finite gap set. Besides the equilibrium measure $\mu_K$ and measures from the isospectral torus of $K$, an important class of measures is the class of reflectionless measures. These are the measures appearing in the Herglotz representation of $G_{n,n}$ from \eqref{CraigFormula}, that is, given by $d\mu_{n,n}(x)=\f1\pi\Im[G_{n,n}(x+i0)]\chi_K(x)dx$. The equilibrium measure $\mu_K$ is a member of this class. We conjecture that \eqref{LB2} holds for all reflectionless measures on a finite gap set.

\textit{Problem 3.} Is there a simple characterization of Szeg\H{o} class measures on a finite gap set or even an interval for which \eqref{LB2} holds?

\textit{Problem 4.}
If $K$ is a finite gap set and $\mu$ be a Borel probability measure which is purely singular continuous with respect to $\mu_K$ and $\mathrm{supp}(\mu)=K$, then $W_n^2(\mu)\rightarrow 0$ since $S(\mu)=0$ by Theorem 4.5 in \cite{CSZ11}.

If $K_1=\overline{\mathbb{D}}$ and $\mu_1$ is the normalized area measure on $K_1$, then $P_n(z)= z^n$ is the $n$-th monic orthogonal polynomial with respect to $\mu_1$ and a straightforward calculation shows that $[W_n^2(\mu_1)]^2=\frac{1}{n+1}$. Since $\mu_{K_1}$ is the normalized arc-measure on the unit circle, $\mu_1$ is purely singular continuous with respect to $\mu_{K_1}$ and we have $W_n^2(\mu_1)\rightarrow 0$. It is also true that Widom factors for the normalized area measure on Jordan domains with analytic boundary goes to $0$, see Theorem 4.1
in \cite{Gustaf}.

If $K_2$ is the Cantor ternary set and $\mu_2$ is the Cantor measure, then $\mu_2$ is purely singular continuous with respect to $\mu_{K_2}$ by \cite{maka}. However, in this case it was conjectured in \cite[Conjecture 3.2]{KruSim15} that $\liminf{W_n^2}(\mu_2)>0$ based on numerical evidence.

It would be interesting to develop the theory of Widom factors for purely singular continuous measures (w.r.t.\ the equilibrium measure of the support).
Proving or disproving existence of such a measure $\mu$ satisfying the condition $\liminf{W_n^2}(\mu)>0$ would be a good start.




\begin{thebibliography}{99}

\bibitem{Alp17} G.\ Alpan, \emph{Orthogonal Polynomials Associated with Equilibrium Measures on  $\mathbb{R}$}, Potential Anal.\ \textbf{46} (2017), 393--401.

\bibitem{Alp19} G.\ Alpan \emph{Szeg\H{o}'s condition on compact subsets of $\mathbb{C}$}, J. Approx. Theory.\ \textbf{245} (2019), 130--136.

\bibitem{AlpGon15} G.\ Alpan, A.\ Goncharov, \emph{Widom factors for the Hilbert norm}, Banach Center Publ.\ \textbf{107} (2015), 11--18.

\bibitem{AlpGon16} G. Alpan, A.\ Goncharov, \emph{Orthogonal polynomials for the weakly equilibrium Cantor sets}, Proc.\ Amer.\ Math.\ Soc., \textbf{144} (2016), 3781--3795.

\bibitem{And17} V.\ V.\ Andrievskii, \emph{On Chebyshev polynomials in the complex plane}, Acta Math. Hungar., \textbf{152} (2017), 505--524.

\bibitem{AndNaz18} V.\ Andrievskii, F.\ Nazarov, \emph{On the Totik--Widom Property for a Quasidisk}, Constr. Approx. (2018), https://doi.org/10.1007/s00365-018-9452-4.

\bibitem{Apt86} A.I.\ Aptekarev, \emph{Asymptotic properties of polynomials orthogonal on a system of contours, and periodic motions of Toda chains}, Math.\ USSR Sb. \textbf{53} (1986), 233--260; Russian original in Mat.\ Sb. (N.S.) \textbf{125(167)} (1984), 231--258.

\bibitem{Bae11} A.\ Baernstein II, R.\ S.\ Laugesen, I.\ E.\ Pritsker, \emph{Moment inequalities for equilibrium measures in the plane}, Pure\ Appl.\ Math.\ Q., \textbf{7} (2011), 47--82

\bibitem{Chr12} J.\ S.\ Christiansen, \emph{Szeg\H{o}'s theorem on Parreau-Widom sets}, Adv.\ Math.\ \textbf{229}, (2012), 1180--1204.

\bibitem{Chr19} J.\ S.\ Christiansen, \emph{Dynamics in the Szeg\H{o} class and polynomial asymptotics}, J.\ Anal.\ Math.\ \textbf{137} (2019), no.\ 2, 723--749.

\bibitem{CSYZ19} J.\ S.\ Christiansen, B.\ Simon, P.\ Yuditskii, and M.\ Zinchenko, \emph{Asymptotics of Chebyshev Polynomials, II. DCT subsets of $\bb R$}, Duke Math. J. \textbf{168} (2019), 325--349.

\bibitem{CSZ10} J.\ S.\ Christiansen, B.\ Simon, and M.\ Zinchenko, \emph{Finite gap Jacobi matrices, I. The isospectral torus}, Constr.\ Approx.\ \textbf{32} (2010), no.\ 1, 1--65.

\bibitem{CSZ11} J.\ S.\ Christiansen, B.\ Simon, and M.\ Zinchenko, \emph{Finite gap Jacobi matrices, II.\ The Szeg\H{o} class}, Constr.\ Approx. \textbf{33} (2011), 365--403.

\bibitem{CSZ17} J.\ S.\ Christiansen, B.\ Simon, and M.\ Zinchenko, \emph{Asymptotics of Chebyshev Polynomials, I. Subsets of $\bb R$}, Invent. Math. \textbf{208} (2017), 217--245.

\bibitem{CSZ3} J.\ S.\ Christiansen, B.\ Simon, and M.\ Zinchenko, \emph{Asymptotics of Chebyshev Polynomials, III. Sets Saturating Szeg\H{o}, Schiefermayr, and Totik--Widom Bounds}, to appear in \emph{Analysis as a Tool in Mathematical Physics -- in Memory of Boris Pavlov}, ed. P. Kurasov, A. Laptev, S. Naboko and B. Simon, to be published by Birkhauser.

\bibitem{CSZ4} J.\ S.\ Christiansen, B.\ Simon, and M.\ Zinchenko, \emph{Asymptotics of Chebyshev Polynomials,\\ IV. Comments on the Complex Case}, to appear in J.\ Anal.\ Math.

\bibitem{DKS10} D.\ Damanik, R.\ Killip, and B.\ Simon, \emph{Perturbations of orthogonal polynomials with periodic recursion coefficients}, Annals of Math.\ (2) \textbf{171} (2010), 1931--2010.


\bibitem{Eic17} B.\ Eichinger, \emph{Szeg\H{o}--Widom asymptotics of Chebyshev polynomials on circular arcs}, J.\ Approx.\ Theory \textbf{217} (2017), 15--25.

\bibitem{Ger60} Ya.\ L.\ Geronimus, \emph{Polynomials Orthogonal on a Circle and Interval}, Pergamon Press, New York, 1960.

\bibitem{GonHat15} A. Goncharov and B. Hatino\u{g}lu, \emph{Widom factors}, Potential Anal.\ \textbf{42} (2015), 671--680.

\bibitem{Gustaf} B.\ Gustafsson, M.\ Putinar, E.\ B.\ Saff, N.\ Stylianopoulos, \emph{ Bergman polynomials on an archipelago: Estimates, zeros and shape reconstruction}, Adv.\ Math.\ \textbf{222}, 1405--1460.

\bibitem{KruSim15} H.\ Kr{\"u}ger, B.\ Simon, \emph{Cantor polynomials and some related classes of OPRL}, J.\ Approx.\ Theory.\ {\bf191} (2015), 71-–93

\bibitem{maka} N.\ Makarov, A,\ Volberg,\emph{On the harmonic measure of discontinuous fractals}, preprint, V.A. Steklov Math. Institute (Leningrad branch), E-6-86, Leningrad.

\bibitem{Nev79} P.~Nevai, \emph{Orthogonal polynomials}, Mem.\ Amer.\ Math.\ Soc. \textbf{18} (1979), no. 213, 1--183.

\bibitem{PehYud01} F.\ Peherstorfer and P.\ Yuditskii, \emph{Asymptotics of orthonormal polynomials in the presence of a  denumerable set of mass points}, Proc.\ Amer.\ Math.\ Soc. \textbf{129} (2001), 3213--3220.

\bibitem{PehYud03} F.\ Peherstorfer and P.\ Yuditskii, \emph{Asymptotic behavior of polynomials orthonormal on a homogeneous set}, J.\ Anal.\ Math. {\bf 89} (2003), 113--154.

\bibitem{Ran95} T.\ Ransford, \emph{Potential Theory in the Complex Plane}, Cambridge University Press, 1995.

\bibitem{Riv90} T.\ J.\ Rivlin, \emph{The Chebyshev Polynomials. From Approximation Theory to Algebra and Number Theory}, 2nd ed., Pure Appl. Math. (N.Y.), Wiley, New York 1990.

\bibitem{Sch08} K.\ Schiefermayr, \emph{A lower bound for the minimum deviation of the Chebyshev polynomial on a compact real set}, East J. Approx. \textbf{14} (2008), 223--233.

\bibitem{Sim05} B.\ Simon, \emph{Orthogonal Polynomials on the Unit Circle, Part 1: Classical Theory, Part 2: Spectral Theory}, AMS Colloquium Publication Series, Vol.\ 54, Providence, RI, 2005.

\bibitem{Sim11} B. Simon, \emph{Szeg\H{o}'s Theorem and Its Descendants: Spectral Theory for  $L^2$ Perturbations of Orthogonal Polynomials}, Princeton University Press, Princeton, 2011.

\bibitem{SimZla03} B.\ Simon and A.\ Zlato\v{s}, \emph{Sum rules and the Szeg\H{o} condition for orthogonal polynomials on the real line}, Comm.\ Math.\ Phys. \textbf{242} (2003), 393--423.



\bibitem{Sze20} G.\ Szeg\H{o}, \emph{Beitr\"{a}ge zur Theorie der Toeplitzschen Formen}, Math.\ Z.\ \textbf{6} (1920), 167--202.

\bibitem{Sze21} G.\ Szeg\H{o}, \emph{Beitr\"{a}ge zur Theorie der Toeplitzschen Formen, II}, Math.\ Z.\ \textbf{9} (1921), 167--190.

\bibitem{Sze22} G.\ Szeg\H{o}, \emph{\"Uber den asymptotischen Ausdruck von Polynomen, die durch eine Orthogonalit\"atseigenschaft definiert sind}, Math.\ Ann.\ \textbf{86} (1922), 114--139.

\bibitem{Sze24} G.\ Szeg\H{o}, \emph{Bemerkungen zu einer Arbeit von Herrn M. Fekete: \"Uber die Verteilung der Wurzeln bei gewissen algebraischen Gleichungen mit ganzzahligen Koeffizienten}, Math. Z. \textbf{21} (1924), 203--208.

\bibitem{Sue79} P.\ K.\ Suetin, \emph{Classical Orthogonal Polynomials}, 2nd ed., Nauka, Moscow, 1979.

\bibitem{Tot09} V.\ Totik, \emph{Chebyshev constants and the inheritance problem}, J.\ Approx.\ Theory \textbf{160} (2009), 187--201.

\bibitem{Tot11} V.\ Totik, \emph{The norm of minimal polynomials on several intervals}, J.\ Approx.\ Theory \textbf{163} (2011), 738--746.

\bibitem{Tot12} V.\ Totik, \emph{Chebyshev polynomials on a system of curves}, J.\ Anal.\ Math.\ \textbf{118} (2012), 317--338.

\bibitem{Tot14} V.\ Totik, \emph{Chebyshev polynomials on compact sets}, Potential Anal.\ \textbf{40} (2014), 511--524.

\bibitem{TotVar15} V.\ Totik and T.\ Varga, \emph{Chebyshev and fast decreasing polynomials}, Proc.\ Lond.\ Math.\ Soc.\ (3) \textbf{110} (2015), no.\ 5, 1057--1098.
\bibitem{TotYud15} V.\ Totik and P.\ Yuditskii, \emph{On a conjecture of Widom}, J.\ Approx.\ Theory \textbf{190} (2015), 50--61.

\bibitem{Wid69} H. Widom, \emph{Extremal polynomials associated with a system of curves in the complex plane}, Adv.\ in Math. \textbf{3} (1969), 127--232.

\end{thebibliography}
\end{document}